\newtheorem{theorem}{Theorem}[section]
\newtheorem{lemma}{Lemma}[section]
\newtheorem{corollary}{Corollary}[section]
\journal{arXiv}
\begin{document}

\begin{frontmatter}


\title{On the mean average of integer partition as the sum of powers} 
\author{Pengyong Ding\corref{cor1}\fnref{label2}}
 \cortext[cor1]{School of Mathematics, Shandong University, Jinan, Shandong 250100 China}





\begin{abstract}
This paper is concerned with the function $r_{k,s}(n)$, the number of (ordered) representations of $n$ as the sum of $s$ positive $k$-th powers, where integers $k,s\ge 2$. We examine the mean average of the function, or equivalently,
\begin{equation*}
\sum_{m=1}^n r_{k,s}(m).
\end{equation*}


\end{abstract}



\begin{keyword}
mean average \sep partition \sep sum of powers \sep lattice points

\MSC[2020] 11P05 \sep 11P21 \sep 11D45

\end{keyword}

\end{frontmatter}



\section{Introduction}

This paper is concerned with the function $r_{k,s}(n)$, the number of (ordered) representations of $n$ as the sum of $s$ positive $k$-th powers, where integers $k,s\ge 2$:
\begin{equation}
r_{k,s}(n)=\sum_{\substack{
x_1, x_2,\cdots x_s \\
x_1^k+x_2^k+\cdots+x_s^k=n
}} 1.
\end{equation}
The function is important in the study of Waring's problem, which is to find the least $s$ for a given $k$, such that $r_{k,s}(n)>0$ for every sufficiently large $n$. Besides Waring's problem, the function is also useful in the study of Diophantine equations. However, the property of $r_{k,s}(n)$ is only poorly understood even when $k$ and $s$ are small. For example, when both $k$ and $s$ are as small as 3, the asymptotic formula for the second moment sum of $r_{3,3}(n)$ is still unknown. The best result that we have already known is provided by Vaughan \cite{Va1}:
\begin{equation*}
\sum_{m=1}^n r_{3,3}(n)^2 \ll n^{\frac{7}{6}}(\log n)^{\varepsilon-\frac{5}{2}},
\end{equation*}
although we may conjecture that
\begin{equation*}
\sum_{m=1}^n r_{3,3}(n)^2 \sim Cn
\end{equation*}
for some positive constant $C$. The mean average of $r_{k,s}(n)$, or equivalently, the sum
\begin{equation}\label{sumimportant}
\sum_{m=1}^n r_{k,s}(m),
\end{equation}
is not as badly known as the other properties. In fact, similar to the argument by Vaughan \cite{Va1}, if we let
\begin{equation*}
\Delta_{k,s}(n)=\sum_{m=1}^n r_{k,s}(m) - \frac{\Gamma \big( \frac{k+1}{k} \big)^s}{\Gamma\big( \frac{k+s}{k} \big)} n^{\frac{s}{k}},
\end{equation*}
then it represents the difference between the number of lattice points in an $s$-dimensional convex body and its volume, which is bounded by its $(s-1)$-dimensional surface volume. So we have
\begin{equation*}
\Delta_{k,s}(n) \ll n^{\frac{s-1}{k}},
\end{equation*}
and thus
\begin{equation}\label{original}
\sum_{m=1}^n r_{k,s}(m) =  \frac{\Gamma \big( \frac{k+1}{k} \big)^s}{\Gamma\big( \frac{k+s}{k} \big)} n^{\frac{s}{k}} + O\big(n^{\frac{s-1}{k}}\big).
\end{equation}
However, we can improve that result by applying van der Corput's method. For example, Vaughan \cite{Va1} have shown that
\begin{equation*}
\sum_{m=1}^n r_{3,2}(m) = \frac{\Gamma( \frac{4}{3} )^2}{ \Gamma( \frac{5}{3} )} n^{\frac{2}{3}}+O\Big(n^{\frac{2}{9}} (\log n)^{\frac{1}{3}}\Big),
\end{equation*}
and
\begin{equation*}
\sum_{m=1}^n r_{3,3}(m) = \Gamma\Big(\frac{4}{3} \Big)^3 n - \frac{\Gamma( \frac{4}{3} )^2}{ 2\Gamma( \frac{5}{3} )} n^{\frac{2}{3}} + O\Big( n^{\frac{5}{9}} (\log n)^{\frac{1}{3}} \Big), 
\end{equation*}
which is more precise than (\ref{original}) when $k=3$ and $s=2$ or $3$. Hence we can follow a similar argument and calculate the mean average of $r_{k,s}(n)$ when $k\ge 4$ and $s\ge 2$. The following theorem shows the result:
\begin{theorem}\label{MainThm}
If $k\ge4$ and $s$ are integers, and $2\le s\le k+1$, then
\begin{equation}\label{MainResult}
\sum_{m=1}^n r_{k,s}(m) = \frac{\Gamma \big( \frac{k+1}{k} \big)^s}{\Gamma\big( \frac{k+s}{k} \big)} n^{\frac{s}{k}} - \frac{s}{2}\cdot  \frac{\Gamma \big( \frac{k+1}{k} \big)^{s-1}}{\Gamma\big( \frac{k+s-1}{k} \big)} n^{\frac{s-1}{k}} +O\Big(n^{\frac{(s-1)k-1}{k^2}} \Big),
\end{equation}
or equivalently,
\begin{equation}\label{MainResultEq}
\sum_{m\le x} r_{k,s}(m) = \frac{\Gamma \big( \frac{k+1}{k} \big)^s}{\Gamma\big( \frac{k+s}{k} \big)} x^{\frac{s}{k}} - \frac{s}{2}\cdot  \frac{\Gamma \big( \frac{k+1}{k} \big)^{s-1}}{\Gamma\big( \frac{k+s-1}{k} \big)} x^{\frac{s-1}{k}} +O\Big(x^{\frac{(s-1)k-1}{k^2}} \Big).
\end{equation}
\end{theorem}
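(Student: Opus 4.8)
The plan is to read $S_s(x):=\sum_{m\le x}r_{k,s}(m)$ as the number of lattice points $(a_1,\dots,a_s)\in\mathbb{Z}_{\ge 1}^{\,s}$ with $a_1^k+\cdots+a_s^k\le x$, and to induct on $s$. Fixing the first $s-1$ coordinates and counting the last one,
\[
S_s(x)=\sum_{\mathbf a}\big\lfloor(x-|\mathbf a|_k)^{1/k}\big\rfloor,\qquad |\mathbf a|_k:=a_1^k+\cdots+a_{s-1}^k,
\]
the sum being over $\mathbf a\in\mathbb{Z}_{\ge 1}^{\,s-1}$ with $|\mathbf a|_k\le x-1$. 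Inserting $\lfloor t\rfloor=t-\tfrac12-\psi(t)$ with $\psi(t)=\{t\}-\tfrac12$ splits $S_s(x)$ into a smooth part $\sum_{\mathbf a}(x-|\mathbf a|_k)^{1/k}-\tfrac12 S_{s-1}(x-1)$ and an oscillatory part $-\sum_{\mathbf a}\psi\big((x-|\mathbf a|_k)^{1/k}\big)$. The base case $s=2$ must be treated directly, since $S_1(t)=\lfloor t^{1/k}\rfloor$ does not satisfy a two-term law with error $o(1)$; for $3\le s\le k+1$ I would run the induction, assuming (\ref{MainResult}) for $s-1$, i.e.\ $S_{s-1}(t)=P_{s-1}(t)+O(t^{\,e_{s-1}})$ with $P_{s-1}$ the stated two-term polynomial and $e_j:=\tfrac{(j-1)k-1}{k^2}$.

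For the smooth part I would evaluate $\sum_{\mathbf a}(x-|\mathbf a|_k)^{1/k}=\sum_{n\le x-1}r_{k,s-1}(n)(x-n)^{1/k}$ by partial summation against $S_{s-1}$, getting $S_{s-1}(x-1)+\tfrac1k\int_1^{x-1}S_{s-1}(t)(x-t)^{1/k-1}\,dt$. Substituting the inductive law, the integral of $P_{s-1}(t)(x-t)^{1/k-1}$ becomes, after $t=xu$, a combination of Beta integrals; by $\tfrac mk B\big(\tfrac{k+1}{k},\tfrac mk\big)=\Gamma\big(\tfrac{k+1}{k}\big)\Gamma\big(\tfrac{k+m}{k}\big)/\Gamma\big(\tfrac{k+m+1}{k}\big)$ the leading piece collapses to $\frac{\Gamma(\frac{k+1}{k})^s}{\Gamma(\frac{k+s}{k})}x^{s/k}$, while the three $x^{(s-1)/k}$ contributions — from $P_{s-1}(x-1)$, from the endpoint $t\approx x$ of that integral (which must \emph{not} be discarded when one enlarges the range of integration), and from $-\tfrac12 S_{s-1}(x-1)$ — combine via the same identities to exactly $-\tfrac s2\cdot\frac{\Gamma(\frac{k+1}{k})^{s-1}}{\Gamma(\frac{k+s-1}{k})}x^{(s-1)/k}$ (in those units the coefficient is $1-\tfrac{s+1}{2}-\tfrac12=-\tfrac s2$). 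The contribution of the error $E_{s-1}$ is $\ll x^{e_{s-1}}\big(1+\int_1^{x-1}(x-t)^{1/k-1}dt\big)\ll x^{\,e_{s-1}+1/k}=x^{\,e_s}$, within budget; for $s=2$ the same computation is carried out with Euler–Maclaurin in place of the (unavailable) two-term law for $S_1$.

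The crux, and the main obstacle, is to show $\Sigma(x):=\sum_{\mathbf a}\psi\big((x-|\mathbf a|_k)^{1/k}\big)\ll x^{\,e_s}$. Here I would use a Vaaler-type smooth approximation of $\psi$, reducing $\Sigma(x)$ to the exponential sums $T_h:=\sum_{\mathbf a}e\big(h(x-|\mathbf a|_k)^{1/k}\big)$ for $1\le h\le H$ (with $e(\theta):=e^{2\pi i\theta}$), plus a truncation error $O(x^{(s-1)/k}/H)$. Peeling the last variable off $T_h$, the inner sum $\sum_{a}e\big(h(y-a^k)^{1/k}\big)$, with $y=x-|\mathbf a'|_k$, has phase $\phi(t)=h(y-t^k)^{1/k}$ whose second derivative $\phi''(t)=-h(k-1)\big[t^{k-2}(y-t^k)^{1/k-1}+t^{2k-2}(y-t^k)^{1/k-2}\big]$ is of one sign, with $|\phi''(t)|\asymp hy^{-1/k}$ over the bulk of the range; van der Corput's second–derivative test, together with a dyadic splitting towards the edge $a^k\approx y$ (near which only $O(1)$ integers lie, handled trivially), gives $\ll_\varepsilon h^{1/2}y^{1/(2k)+\varepsilon}$. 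Summing this trivially over the remaining $s-2$ coordinates (a set of cardinality $\ll x^{(s-2)/k}$) and then over $1\le h\le H$ against weights $\ll 1/h$, and choosing $H\asymp x^{1/(3k)}$, yields $\Sigma(x)\ll_\varepsilon x^{(3s-4)/(3k)+\varepsilon}$. Since $\tfrac{3s-4}{3k}<\tfrac{(s-1)k-1}{k^2}=e_s$ exactly when $k\ge 4$ (with a margin $\asymp 1/k$ that absorbs the $\varepsilon$ and any logarithmic losses), this is $O(x^{e_s})$, and all the lower–order contributions above stay $O(x^{e_s})$ throughout $2\le s\le k+1$. Combining the three parts gives (\ref{MainResult}), and (\ref{MainResultEq}) follows from $\sum_{m\le x}=\sum_{m\le\lfloor x\rfloor}$ together with $x^\theta=\lfloor x\rfloor^\theta+O(x^{\theta-1})$ and $\theta-1<e_s$.

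The hardest point will be the van der Corput estimate for $T_h$: making it genuinely uniform in $h$, in $y$ and in the suppressed coordinates, and especially handling the neighbourhood of the edge $a^k\approx y$ where $\phi$ and all its derivatives blow up — one must isolate that edge, bound it trivially, and check that this trivial bound remains below $x^{e_s}$ at every stage of the induction. The Gamma-function bookkeeping for the coefficient $-\tfrac s2$ of the second main term is routine in principle, but must be done carefully because it depends on the endpoint behaviour of the Beta-type integrals, which is easily mishandled.
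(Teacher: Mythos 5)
Your architecture is genuinely different from the paper's. The paper pushes \emph{all} of the exponential-sum work into the base case $s=2$, via the symmetric count $\sum_{n\le x}r_{k,2}(n)=2\sum_{m\le(x/2)^{1/k}}\lfloor(x-m^k)^{1/k}\rfloor-\lfloor(x/2)^{1/k}\rfloor^2$ and a single lemma (Lemma~\ref{lemmaerrorterm}) giving $\sum_{m\le(x/2)^{1/k}}B_1((x-m^k)^{1/k})\ll x^{(k-1)/k^2}$; the inductive step then uses $\sum_{m\le x}r_{k,s+1}(m)=\sum_{l\le x^{1/k}}\sum_{n\le x-l^k}r_{k,s}(n)$ and is pure Euler--Maclaurin on the smooth sums $\sum_{l\le x^{1/k}}(x-l^k)^{s/k}$, with no oscillatory estimate needed (this is exactly where the hypothesis $s\le k+1$ enters, through the sign condition (\ref{ImportantNonnegative})). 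Your smooth-part computation by partial summation against $S_{s-1}$ is correct, including the endpoint bookkeeping $1-\tfrac{s+1}{2}-\tfrac12=-\tfrac s2$ and the error budget $x^{e_{s-1}+1/k}=x^{e_s}$. The price of keeping the floor function on the last coordinate is that you must bound the $(s-1)$-dimensional oscillatory sum $\Sigma(x)$ at \emph{every} inductive step, and that is where the proposal as written fails.

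The claimed estimate $\sum_a e\bigl(h(y-a^k)^{1/k}\bigr)\ll_\varepsilon h^{1/2}y^{1/(2k)+\varepsilon}$ is false over the full range of $a$. On a dyadic block $a\asymp M$ one has $|\phi''|\asymp hy^{1/k-1}M^{k-2}$, and the second-derivative test gives $\ll h^{1/2}y^{(1-k)/(2k)}M^{k/2}+h^{-1/2}y^{(k-1)/(2k)}M^{(2-k)/2}$. The second term, which your bound silently discards, \emph{grows} as $M$ decreases (since $k\ge4$), and at $M\asymp1$ it is of size $h^{-1/2}y^{(k-1)/(2k)}\approx h^{-1/2}y^{1/2-1/(2k)}$, vastly larger than $y^{1/(2k)}$. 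So the danger zone is not only the edge $a^k\approx y$ that you flag, but the opposite end where $a$ is small; consequently the exponent $(3s-4)/(3k)$ and the choice $H\asymp x^{1/(3k)}$ do not survive. The repair is precisely the device of Lemma~\ref{lemmaerrorterm}: truncate the dyadic decomposition at a parameter $\nu$, bound the $a\le\nu$ portion of the $\psi$-sum trivially by $O(\nu)$, and optimize; with $\nu\asymp y^{(k-1)/k^2}$ and $H\asymp y^{3/(2k^2)}$ this yields $\sum_a\psi\bigl((y-a^k)^{1/k}\bigr)\ll y^{(k-1)/k^2}$ for $k\ge4$, hence $\Sigma(x)\ll x^{(s-2)/k}\cdot x^{(k-1)/k^2}=x^{e_s}$ --- exactly on budget, with no room for the $\varepsilon$-losses you allow yourself in that term. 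With this correction (plus a separate dyadic decomposition in $y-a^k$ near the edge, where only the topmost ranges need van der Corput and the last $O(1)$ integers are bounded trivially), your induction does close; note that it would then apparently not use $s\le k+1$ anywhere, which deserves scrutiny, since the paper's argument genuinely requires that restriction.
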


\section{The proof of Theorem \ref{MainThm}}

We state the following two results without proof. The first one is the van der Corput Lemma, which is Theorem 2.2 of Graham and Kolesnik \cite{GK}.
\begin{lemma}\label{vanderCorput}
Suppose that $a<b$ and $f$ has a continuous second derivative on $[a,b]$. Suppose also that $\mu>0$, $\eta>1$ and that for every $\alpha\in[a,b]$, we have $\mu\le |f^{''}(\alpha)| \le\eta\mu$. Then
\begin{equation*}
\sum_{a<n\le b} e(f(n))\ll \mu^{-\frac{1}{2}}+(b-a)\eta\mu^{\frac{1}{2}}.
\end{equation*}
\end{lemma}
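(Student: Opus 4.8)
The plan is to prove Lemma~\ref{vanderCorput} by reducing the exponential sum to many short sums on which a first--derivative estimate applies, and by counting trivially the terms where $f'$ lies close to an integer. Throughout I may assume $f''>0$ on $[a,b]$: replacing $f$ by $-f$ conjugates each term $e(f(n))$ and hence leaves $\big|\sum_{a<n\le b}e(f(n))\big|$ unchanged, so the sign of $f''$ is immaterial. With $f''>0$ the derivative $f'$ is strictly increasing, and its total increase is
\begin{equation*}
L:=f'(b)-f'(a)=\int_a^b f''(\alpha)\,d\alpha,\qquad \mu(b-a)\le L\le \eta\mu(b-a).
\end{equation*}
The one external ingredient I will use is the classical first--derivative estimate (the Kusmin--Landau inequality): if $f'$ is monotonic on an interval $I$ and $\|f'(x)\|\ge\lambda$ for all $x\in I$, where $\|\cdot\|$ denotes distance to the nearest integer and $0<\lambda\le\tfrac12$, then $\sum_{n\in I}e(f(n))\ll\lambda^{-1}$; this is itself proved by partial summation, comparing the consecutive terms $e(f(n+1))-e(f(n))$.

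First I would fix a threshold $\lambda\in(0,\tfrac12]$, to be taken as $\lambda=\mu^{1/2}$ in the main range $\mu\le\tfrac14$, and split $[a,b]$ according to the size of $\|f'\|$. The set $\{x:\|f'(x)\|<\lambda\}$ is a union of intervals $I_m=\{x:|f'(x)-m|<\lambda\}$, one for each integer $m$ in $[f'(a),f'(b)]$, of which there are at most $L+2$. Since $f''\ge\mu$, across $I_m$ the derivative $f'$ increases by at least $\mu\,|I_m|$ while changing by less than $2\lambda$, so $|I_m|<2\lambda/\mu$ and $I_m$ contains fewer than $2\lambda/\mu+1$ integers; bounding these terms trivially by $1$ contributes $\ll(L+1)(\lambda/\mu+1)$. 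On the complementary set $\|f'\|\ge\lambda$, which consists of $O(L+1)$ intervals $J$, the function $f'$ is monotonic and bounded away from the integers, so the Kusmin--Landau inequality gives $\sum_{n\in J}e(f(n))\ll\lambda^{-1}$ on each, for a total of $\ll(L+1)\lambda^{-1}$.

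Combining the two contributions yields
\begin{equation*}
\sum_{a<n\le b}e(f(n))\ll (L+1)\Big(\lambda^{-1}+\frac{\lambda}{\mu}+1\Big).
\end{equation*}
Choosing $\lambda=\mu^{1/2}$ when $\mu\le\tfrac14$ makes the bracket $\ll\mu^{-1/2}$ (the $+1$ being absorbed since then $\mu\le1$), and inserting $L+1\le\eta\mu(b-a)+1$ turns the right-hand side into $\mu^{-1/2}+(b-a)\eta\mu^{1/2}$, exactly the claimed bound. The remaining range $\mu>\tfrac14$, together with the degenerate case $b-a<1$ of at most one summand, is dealt with by the trivial estimate $\sum_{a<n\le b}1\ll(b-a)+1$ and is routine.

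I expect the main obstacle to be the bookkeeping in the splitting step: one must check that the numbers of intervals $I_m$ and $J$ are genuinely $O(L+1)$ and, more delicately, that the trivially--counted near--integer terms are dominated by the main term after optimising $\lambda$. This is precisely where the lower bound $|f''|\ge\mu$ (rather than merely $f''\ne0$) is indispensable, since it converts an interval of $f'$--values of length $2\lambda$ into a short $x$--interval of length $<2\lambda/\mu$; without such a lower bound the count of points with $f'$ near an integer could not be controlled, and the method would fail.
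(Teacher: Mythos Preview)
The paper does not prove Lemma~\ref{vanderCorput}; it explicitly states the result without proof, citing it as Theorem~2.2 of Graham and Kolesnik. Your proposal supplies exactly the standard argument found there: reduce to $f''>0$, split $[a,b]$ according to whether $\|f'\|<\lambda$, bound the ``near--integer'' pieces trivially using $|I_m|\le 2\lambda/\mu$, apply the Kusmin--Landau first--derivative estimate on the remaining $O(L+1)$ subintervals, and optimise with $\lambda=\mu^{1/2}$. The bookkeeping is carried out correctly and the final balancing $(L+1)\mu^{-1/2}\le \eta\mu^{1/2}(b-a)+\mu^{-1/2}$ gives precisely the stated bound.

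One small caveat: the clause dismissing the degenerate case $b-a<1$ as ``routine'' is slightly glib, since with a single summand the left side equals~$1$ while $\mu^{-1/2}+(b-a)\eta\mu^{1/2}$ can in principle be made small; this is a well--known quirk of the lemma as usually stated (and is harmless in every application, including those in the present paper, where $b-a\gg1$ or $\mu\ll1$). Apart from that, your argument is complete and matches the source the paper relies on.
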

The second result is Lemma 2.2 of Vaughan \cite{Va1}. For any $\alpha\in\mathbb{R}$, define
\begin{equation*}
B_1(\alpha)=\alpha-\lfloor\alpha\rfloor-\frac{1}{2}, \qquad B_2(\alpha)=\int_0^{\alpha} B_1(\beta)\mathrm{d}\beta,
\end{equation*}
then we have
\begin{lemma}\label{B1B2}
Let $H, \alpha \in \mathbb{R}$ and $H\ge 2$. Then
\begin{equation*}
B_1(\alpha)=-\sum_{0<|h|\le H}\frac{e(\alpha h)}{2\pi ih} +O\Big( \min \Big(1, \frac{1}{H\Vert\alpha\Vert}\Big) \Big)
\end{equation*}
and
\begin{equation*}
\min \Big( 1, \frac{1}{H\Vert\alpha\Vert}\Big)=\sum_{h=-\infty}^{\infty} c(h)e(\alpha h),
\end{equation*}
where
\begin{equation*}
c(0)=\frac{2}{H}\Big( 1+\log \frac{H}{2}\Big), \qquad c(h)\ll \min\Big(\frac{\log 2H}{H} , \frac{1}{|h|}, \frac{H}{h^2}\Big)(h\ne0).
\end{equation*}
\end{lemma}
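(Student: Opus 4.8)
The plan is to prove the two assertions separately, each by direct Fourier analysis of an explicit $1$-periodic function; throughout I write $e(x)=e^{2\pi ix}$.

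For the second identity I would set $g(\alpha)=\min(1,1/(H\Vert\alpha\Vert))$. This function is even and $1$-periodic, equal to $1$ on $\Vert\alpha\Vert\le 1/H$ and to $1/(H\Vert\alpha\Vert)$ on $1/H\le\Vert\alpha\Vert\le 1/2$, so it has an absolutely convergent Fourier expansion $\sum_h c(h)e(\alpha h)$ with real coefficients $c(h)=c(-h)=2\int_0^{1/2}g(\alpha)\cos(2\pi h\alpha)\,d\alpha$. For $h=0$ I split the integral at $1/H$ and compute $\int_0^{1/H}d\alpha+\int_{1/H}^{1/2}\frac{d\alpha}{H\alpha}=\frac1H+\frac1H\log\frac H2$, which after doubling gives exactly $c(0)=\frac2H(1+\log\frac H2)$. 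The three upper bounds for $h\ne0$ then come from three levels of smoothing. The trivial estimate $|c(h)|\le c(0)\ll(\log 2H)/H$ handles small $h$. Integrating by parts once, the boundary terms vanish (the factor $\sin(2\pi h\alpha)$ kills them at $0$ and $1/2$, and $g$ is continuous at $1/H$), reducing $c(h)$ to $\frac1{\pi h}\int_{1/H}^{1/2}\frac{\sin(2\pi h\alpha)}{H\alpha^2}\,d\alpha$; bounding this integral by $\int_{1/H}^{1/2}\frac{d\alpha}{H\alpha^2}\ll 1$ yields $|c(h)|\ll 1/|h|$. A second integration by parts produces a boundary term of size $H/|h|$ at $\alpha=1/H$ together with a remaining integral of comparable size, giving $|c(h)|\ll H/h^2$. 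Taking the minimum of the three is exactly the claim.

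For the first identity I would first rewrite the truncated sum in real form: pairing $h$ with $-h$ gives $-\sum_{0<|h|\le H}\frac{e(\alpha h)}{2\pi ih}=-\sum_{h=1}^{H}\frac{\sin(2\pi h\alpha)}{\pi h}$, so the assertion is that the Fourier tail $\sum_{h>H}\frac{\sin(2\pi h\alpha)}{\pi h}$ is $O(\min(1,1/(H\Vert\alpha\Vert)))$. The full series equals $-\pi B_1(\alpha)$, which is bounded; combined with the classical uniform bound on the partial sums of $\sum_h\sin(2\pi h\alpha)/h$, this forces the tail to be $O(1)$. For the other half of the minimum I would apply Abel summation to the tail, using the geometric estimate $\bigl|\sum_{h=M+1}^{N}e(\alpha h)\bigr|\le 1/|\sin(\pi\alpha)|\le 1/(2\Vert\alpha\Vert)$; since the coefficients $1/(2\pi h)$ decrease and are of size $1/H$ at the cutoff, partial summation bounds the tail by $O(1/(H\Vert\alpha\Vert))$. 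Taking the smaller of the two estimates gives the stated error term.

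The main obstacle is the rigorous handling of the conditionally convergent series for $B_1$: it does not converge absolutely, so I must argue through the uniform boundedness of its partial sums (the conjugate-kernel/Gibbs estimate) and through Abel summation rather than by termwise bounds, while keeping careful track of the behaviour near integers, where $\Vert\alpha\Vert\to 0$ forces the switch from the $1/(H\Vert\alpha\Vert)$ bound to the $O(1)$ bound. By contrast, the min-function identity rests on an absolutely convergent expansion, and the only delicate bookkeeping there is matching the boundary contributions from the two integration-by-parts steps so that the exact constant in $c(0)$ and the $H/h^2$ saving emerge cleanly.
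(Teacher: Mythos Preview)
Your argument is sound: the computation of $c(0)$ is exact, the three bounds on $c(h)$ follow as you describe from the trivial estimate and from one and two integrations by parts, and for the first identity the combination of the uniform Gibbs bound on the partial sums of $\sum_h \sin(2\pi h\alpha)/h$ with Abel summation against the geometric-sum estimate $\ll 1/\Vert\alpha\Vert$ gives exactly the stated $O(\min(1,1/(H\Vert\alpha\Vert)))$ tail.

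There is nothing to compare against, however: the paper does not prove this lemma but simply quotes it as Lemma~2.2 of Vaughan~\cite{Va1} (``We state the following two results without proof''). Your direct Fourier-analytic derivation is the standard way this result is established, so you have supplied precisely the proof the paper chose to omit.
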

We can use these results to prove the following lemma.
\begin{lemma}\label{lemmaerrorterm}
If $k\ge4$ is an integer and $x>1$, then
\begin{equation}\label{errorterm}
\sum_{m\le(x/2)^{1/k}} B_1 \Big( (x-m^k)^{\frac{1}{k}}\Big)\ll x^{\frac{k-1}{k^2}}.
\end{equation}
\end{lemma}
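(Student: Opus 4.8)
The plan is to insert the truncated Fourier expansion of $B_1$ from Lemma \ref{B1B2}, with a parameter $H$ to be optimised, so that the sum is reduced to the exponential sums $E(h):=\sum_{m\le M}e\bigl(h(x-m^k)^{1/k}\bigr)$, where $M:=(x/2)^{1/k}$, and then to estimate each $E(h)$ by van der Corput's Lemma (Lemma \ref{vanderCorput}) after a dyadic dissection of the range of $m$. Write $g(t):=(x-t^k)^{1/k}$. Differentiating twice and using the identity $(x-t^k)+t^k=x$ gives the clean formula
\[
g''(t)=-(k-1)\,x\,t^{\,k-2}\,(x-t^k)^{-(2k-1)/k}.
\]
Since $t\le M$ forces $x-t^k\ge x/2$, on a dyadic block $t\in(N,2N]$ with $2N\le M$ the quantity $|g''(t)|$ is comparable to $N^{k-2}x^{(1-k)/k}$, with the ratio $\max|g''|/\min|g''|$ on the block bounded in terms of $k$ alone. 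Hence Lemma \ref{vanderCorput} applies to $f=hg$ on each such block with $\mu$ of order $|h|N^{k-2}x^{(1-k)/k}$ and $\eta$ bounded, giving a block bound $\ll\mu^{-1/2}+N\mu^{1/2}$, to be compared with the trivial bound $N$.

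First I would record, from Lemma \ref{B1B2},
\[
\sum_{m\le M}B_1(g(m))=-\sum_{0<|h|\le H}\frac{1}{2\pi i h}\,E(h)+O\!\Bigl(c(0)M+\sum_{h\neq0}|c(h)|\,|E(h)|\Bigr),
\]
where $c(0)\ll H^{-1}\log H$ and $c(h)\ll\min(1/|h|,\,H/h^2)$. The heart of the matter is the estimate
\[
E(h)\ll |h|^{1/2}x^{1/(2k)}+x^{(k-1)/k^2}|h|^{-1/k}\qquad(1\le |h|\le x^{1/k}),
\]
together with the trivial $E(h)\ll M=x^{1/k}$ for the remaining $h$. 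To prove it I would split $m$ dyadically and, on the block $(N,2N]$, apply van der Corput when $N\ge N^{\ast}$ and the trivial bound when $N<N^{\ast}$, where $N^{\ast}:=(x^{(k-1)/k}/|h|)^{1/k}=x^{(k-1)/k^2}|h|^{-1/k}$ is the scale at which $\mu\asymp N^{-2}$. Because $k\ge4$, the successive block contributions decay geometrically away from the two critical scales $N\asymp N^{\ast}$ (contributing $\ll N^{\ast}=x^{(k-1)/k^2}|h|^{-1/k}$) and $N\asymp M$ (contributing $\ll M\mu_M^{1/2}=|h|^{1/2}x^{1/(2k)}$), so the dyadic sums telescope without any logarithmic loss — this is precisely where the hypothesis $k\ge4$ is used.

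Finally I would substitute this bound back, using $H\le x^{1/k}$ so that the refined estimate applies on the whole range $|h|\le H$. The main term contributes $\ll x^{1/(2k)}\sum_{h\le H}h^{-1/2}+x^{(k-1)/k^2}\sum_{h\le H}h^{-1-1/k}\ll x^{1/(2k)}H^{1/2}+x^{(k-1)/k^2}$; the term $c(0)M\ll x^{1/k}H^{-1}\log H$; and, using $c(h)\ll H/h^2$ with the estimates for $E(h)$ (the refined one for $H<|h|\le x^{1/k}$, the trivial one for $|h|>x^{1/k}$), the tail $|h|>H$ contributes $\ll x^{1/(2k)}H^{1/2}+x^{(k-1)/k^2}H^{-1/k}+H$. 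Choosing $H=x^{(k-2)/k^2}$ (which satisfies $H\le x^{1/k}$) makes every contribution $\ll x^{(k-1)/k^2}$, and since $2/k^2<(k-1)/k^2$ when $k\ge4$ the stray factor $\log H$ in $c(0)M$ is absorbed; the small values of $x$ are handled trivially.

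The main obstacle is the bookkeeping inside the dyadic dissection of $E(h)$: one must identify, uniformly in $h$, which blocks are handled by van der Corput and which by the trivial bound, verify the geometric decay of the block contributions that prevents a logarithm from appearing (this is exactly where $k\ge4$ is needed), and then balance the two opposing requirements on $H$, namely $x^{1/(2k)}H^{1/2}\ll x^{(k-1)/k^2}$ and $x^{1/k}H^{-1}\log H\ll x^{(k-1)/k^2}$.
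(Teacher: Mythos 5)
Your argument is correct and follows essentially the same route as the paper: expand $B_1$ via Lemma \ref{B1B2} with a parameter $H$, dissect the range of $m$ dyadically, estimate each block by Lemma \ref{vanderCorput} and the initial range trivially, and then optimise (your $H=x^{(k-2)/k^2}$ sits at the edge of the admissible window $x^{1/k^2}<H<x^{(k-2)/k^2}$ that the paper itself points out after the proof). The only organisational difference is that your trivial cutoff $N^{*}$ depends on $h$ inside each $E(h)$, whereas the paper uses a single $h$-independent cutoff $\nu=x^{(k-1)/k^2}$ applied before the Fourier expansion; one small misstatement: the geometric decay of the dyadic block contributions already holds for $k=3$, so the real place $k\ge4$ enters is in absorbing $c(0)M\ll x^{2/k^2}\log x$ into $x^{(k-1)/k^2}$, which you in fact also note at the end.
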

\begin{proof}
We use a similar method to the proof given by Vaughan \cite{Va1}. First, for convenience, we define
\begin{equation*}
M^{\prime}=\min\Big(2M, \Big(\frac{x}{2}\Big)^{\frac{1}{k}}\Big)
\end{equation*}
for every $M$. Then let $\nu$ be a parameter such that $1\le\nu\le(x/2)^{1/k}$, and define the set
\begin{equation*}
\mathcal{M}=\Big\{\nu\cdot 2^j \Big\vert j\ge0, \nu\cdot 2^j\le\Big(\frac{x}{2}\Big)^{\frac{1}{k}}\Big\}.
\end{equation*}
So we have
\begin{equation}\label{B1toB1M}
\sum_{m\le(x/2)^{1/k}} B_1 \Big( (x-m^k)^{\frac{1}{k}}\Big) =\sum_{M\in\mathcal{M}} \sum_{M<m\le M^{\prime}} B_1 \Big( (x-m^k)^{\frac{1}{k}}\Big)+O(\nu).
\end{equation}
By Lemma \ref{B1B2}, let $\alpha=(x-m^k)^{1/k}$, we have
\begin{align*}
\sum_{M<m\le M^{\prime}} B_1\Big((x-m^k)^{\frac{1}{k}}\Big)= &-\sum_{0<|h|\le H} \frac{T(M,h)}{2\pi ih} \\
&+O\Big(\sum_{M<m\le M^{\prime}} \min \Big(1, \frac{1}{H\Vert(x-m^k)^{\frac{1}{k}}\Vert}\Big) \Big),
\end{align*}
where
\begin{equation*}
T(M,h)=\sum_{M<m\le M^{\prime}} e\Big((x-m^k)^{\frac{1}{k}} h\Big).
\end{equation*}
Regarding the error term, still by Lemma \ref{B1B2}, we have
\begin{equation*}
\sum_{M<m\le M^{\prime}} \min \Big(1, \frac{1}{H\Vert(x-m^k)^{\frac{1}{k}}\Vert}\Big) = \sum_{h=-\infty}^{\infty} c(h) T(M,h),
\end{equation*}
so
\begin{equation}\label{MMprimeB1}
\sum_{M<m\le M^{\prime}} B_1\Big((x-m^k)^{\frac{1}{k}}\Big)=-\sum_{0<|h|\le H} \frac{T(M,h)}{2\pi ih} +O\Big(\sum_{h=-\infty}^{\infty} c(h) T(M,h)\Big).
\end{equation}
Now consider $T(M,h)$ when $h\ne0$. First, for convenience let
\begin{equation*}
f(\alpha)=h(x-\alpha^k)^{\frac{1}{k}},
\end{equation*}
then for $\alpha<x^{1/k}$,
\begin{equation*}
f^{\prime\prime}(\alpha)= -(k-1) h x \alpha^{k-2}(x-\alpha^k)^{\frac{1}{k}-2}.
\end{equation*}
So when $\alpha\in [M, M^{\prime}]$, we have
\begin{equation*}
(k-1) \vert h\vert x^{\frac{1}{k}-1} M^{k-2}\le \vert f^{\prime\prime}(\alpha) \vert \le 2^{k-\frac{1}{k}} (k-1) \vert h\vert x^{\frac{1}{k}-1} M^{k-2}.
\end{equation*}
Therefore, the function $f(\alpha)$ satisfies the conditions in Lemma \ref{vanderCorput}, where $a$, $b$, $\mu$, and $\eta$ are replaced by $M$, $M^{\prime}$, $(k-1) \vert h\vert x^{\frac{1}{k}-1} M^{k-2}$, and $2^{k-\frac{1}{k}}$ respectively. By that lemma, when $h\ne0$, we have
\begin{align*}
T(M,h) &\ll \big((k-1) \vert h\vert x^{\frac{1}{k}-1} M^{k-2}\big)^{-\frac{1}{2}} + (M^{\prime}-M) 2^{k-\frac{1}{k}} \big((k-1) \vert h\vert x^{\frac{1}{k}-1} M^{k-2}\big)^{\frac{1}{2}} \notag\\
&\ll \vert h\vert^{-\frac{1}{2}} x^{\frac{k-1}{2k}} M^{\frac{2-k}{2}} + \vert h\vert^{\frac{1}{2}} x^{\frac{1-k}{2k}} M^{\frac{k}{2}}.
\end{align*}
Finally, if $h=0$, then obviously $T(M, 0)\ll M$. Hence by (\ref{MMprimeB1}) and Lemma \ref{B1B2}, we have
\begin{align}\label{B1withH}
&\sum_{M<m\le M^{\prime}} B_1\Big((x-m^k)^{\frac{1}{k}}\Big) \notag\\
\ll& \sum_{0<|h|\le H} \frac{\vert T(M,h) \vert}{\vert h \vert} +\sum^{\infty}_{\substack{
h=-\infty\\
h\ne 0
}} \vert c(h) \vert \vert T(M,h) \vert + \vert c(0)\vert \vert T(M,0)\vert \notag\\
\ll& \sum_{0<|h|\le H} \big( \vert h\vert^{-\frac{3}{2}} x^{\frac{k-1}{2k}} M^{\frac{2-k}{2}} + \vert h\vert^{-\frac{1}{2}} x^{\frac{1-k}{2k}} M^{\frac{k}{2}} \big) \notag\\
&+\sum^{\infty}_{\substack{
h=-\infty\\
h\ne 0
}}\Big( \frac{1}{|h|}  \vert h\vert^{-\frac{1}{2}} x^{\frac{k-1}{2k}} M^{\frac{2-k}{2}} +\min\Big(\frac{1}{|h|}, \frac{H}{h^2}\Big) \vert h\vert^{\frac{1}{2}} x^{\frac{1-k}{2k}} M^{\frac{k}{2}} \Big) \notag\\
&+ \frac{2}{H}\Big( 1+\log \frac{H}{2}\Big) M \notag\\
\ll& x^{\frac{k-1}{2k}} M^{\frac{2-k}{2}} + H^{\frac{1}{2}} x^{\frac{1-k}{2k}} M^{\frac{k}{2}} + MH^{-1} \log (2H).
\end{align}
To minimize the size of the error terms, we need an optimal choice for $H$. A possible option is
\begin{equation}\label{Hvalue}
H=x^{\frac{3}{2k^2}},
\end{equation}
so that (\ref{B1withH}) becomes
\begin{equation*}
\sum_{M<m\le M^{\prime}} B_1\Big((x-m^k)^{\frac{1}{k}}\Big) \ll x^{\frac{k-1}{2k}} M^{\frac{2-k}{2}} + x^{\frac{3+2k-2k^2}{4k^2}} M^{\frac{k}{2}} + Mx^{-\frac{3}{2k^2}} \log x.
\end{equation*}
Therefore, by (\ref{B1toB1M}),
\begin{align}\label{B1FFinal}
&\sum_{m\le(x/2)^{1/k}} B_1 \Big( (x-m^k)^{\frac{1}{k}}\Big) \notag\\
\ll& \sum_{M\in\mathcal{M}} \big( x^{\frac{k-1}{2k}} M^{\frac{2-k}{2}}\big) + \sum_{M\in\mathcal{M}} \big(x^{\frac{3+2k-2k^2}{4k^2}} M^{\frac{k}{2}} \big)+ \sum_{M\in\mathcal{M}} \big(Mx^{-\frac{3}{2k^2}} \log x \big)+\nu \notag\\
\ll& x^{\frac{k-1}{2k}}\nu^{\frac{2-k}{2}}+x^{\frac{2k+3}{4k^2}}+x^{\frac{2k-3}{2k^2}}(\log x)+\nu.
\end{align}
From the first and the last terms, the only optimal choice for $\nu$ such that the error terms have the minimal size is 
\begin{equation}\label{nuvalue}
\nu=x^{\frac{k-1}{k^2}},
\end{equation}
which gives the conclusion.
\end{proof}
We notice that in the proof of Lemma \ref{lemmaerrorterm}, the first and the last terms of (\ref{B1FFinal}) are independent from the choice for $H$, so the optimal choice for $\nu$ does not depend on $H$ as well, and it is always supposed to be (\ref{nuvalue}). However, we do have plenty of choices for $H$ to replace (\ref{Hvalue}). For example, if $H=x^A$ where
\begin{equation*}
\frac{1}{k^2}<A<\frac{k-2}{k^2},
\end{equation*}
then we can also prove the lemma in a similar way.

Now we use Lemma \ref{lemmaerrorterm} to prove Theorem \ref{MainThm}. We start from the case when $s=2$.
\begin{lemma}\label{lemmawhens=2}
If $k\ge4$ is an integer and $x>1$, then
\begin{equation}\label{whens=2}
\sum_{n\le x} r_{k,2}(n) = \frac{\Gamma(\frac{k+1}{k})^2}{\Gamma(\frac{k+2}{k})} x^{\frac{2}{k}}-x^{\frac{1}{k}}+O\Big(x^{\frac{k-1}{k^2}} \Big).
\end{equation}
\end{lemma}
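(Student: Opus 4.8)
The plan is to express $\sum_{n\le x} r_{k,2}(n)$ as a count of lattice points $(x_1,x_2)$ with $x_1,x_2\ge 1$ and $x_1^k+x_2^k\le x$, and then convert this into a sum over one variable. Writing the condition as $x_2\le (x-x_1^k)^{1/k}$, we have $\sum_{n\le x} r_{k,2}(n)=\sum_{1\le m\le x^{1/k}} \lfloor (x-m^k)^{1/k}\rfloor$ (with the understanding that the summand vanishes when $m^k\ge x-1$; the boundary contributes only $O(1)$). Next I would use the identity $\lfloor\alpha\rfloor=\alpha-\tfrac12-B_1(\alpha)$ to split this into a main term $\sum_m (x-m^k)^{1/k}$, a term $-\tfrac12\sum_m 1$, and an error $-\sum_m B_1\big((x-m^k)^{1/k}\big)$.

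The main term $\sum_{m\le x^{1/k}} (x-m^k)^{1/k}$ is handled by Euler--Maclaurin summation: it equals $\int_0^{x^{1/k}} (x-t^k)^{1/k}\,dt + \tfrac12 x^{1/k} + $ lower-order corrections. The integral evaluates, via the substitution $t=x^{1/k}u$ and the Beta-function identity $\int_0^1 (1-u^k)^{1/k}\,du = \frac{1}{k}B\big(\tfrac1k,\tfrac{k+1}{k}\big)=\frac{\Gamma(1/k)\Gamma((k+1)/k)}{k\,\Gamma((k+2)/k)}=\frac{\Gamma((k+1)/k)^2}{\Gamma((k+2)/k)}$ (using $\Gamma(1/k)/k=\Gamma((k+1)/k)$), to $\frac{\Gamma((k+1)/k)^2}{\Gamma((k+2)/k)}x^{2/k}$. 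The $\tfrac12 x^{1/k}$ from Euler--Maclaurin cancels against the $-\tfrac12\sum_{m\le x^{1/k}} 1 = -\tfrac12 x^{1/k}+O(1)$, leaving the $-x^{1/k}$ term in \eqref{whens=2} from... wait, one must track this carefully: actually $-\tfrac12\sum_m 1$ gives $-\tfrac12 x^{1/k}$, and Euler--Maclaurin on the main term gives $+\tfrac12 x^{1/k}$ plus a boundary term at $t=x^{1/k}$ where $(x-t^k)^{1/k}\to 0$, so these largely cancel; the genuine $-x^{1/k}$ coefficient arises because the correct count is over $m\le (x-1)^{1/k}$ rather than $x^{1/k}$, or equivalently from a more careful Euler--Maclaurin accounting near the endpoint. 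I would carry this bookkeeping out explicitly, noting that the derivative of $(x-t^k)^{1/k}$ blows up at the endpoint so the Euler--Maclaurin remainder near $t=x^{1/k}$ needs the standard truncation-at-distance-$O(1)$ argument, contributing only $O(1)$.

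For the error term $\sum_{m\le x^{1/k}} B_1\big((x-m^k)^{1/k}\big)$, I would split the range at $m=(x/2)^{1/k}$. On the lower range $m\le (x/2)^{1/k}$, Lemma~\ref{lemmaerrorterm} gives the bound $O(x^{(k-1)/k^2})$ directly. On the upper range $(x/2)^{1/k}<m\le x^{1/k}$, I would exploit the symmetry of the region $x_1^k+x_2^k\le x$: setting $n=(x-m^k)^{1/k}$, the variable $n$ ranges over $n\le (x/2)^{1/k}$, and the sum over the upper $m$-range is, up to $O(1)$ boundary corrections, another sum of the same shape $\sum_{n\le (x/2)^{1/k}} B_1\big((x-n^k)^{1/k}\big)$, hence again $O(x^{(k-1)/k^2})$ by Lemma~\ref{lemmaerrorterm}. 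Combining everything yields \eqref{whens=2}.

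The main obstacle I anticipate is the careful endpoint analysis in the Euler--Maclaurin step and in the symmetry splitting: the function $(x-t^k)^{1/k}$ has an integrable but unbounded derivative as $t\to x^{1/k}$, so one cannot apply Euler--Maclaurin blindly over the whole range, and the transfer from the upper to the lower range via $m\leftrightarrow n$ requires tracking the $O(1)$ worth of lattice points near the diagonal $x_1=x_2$ and near the axes. Once one is disciplined about truncating at distance $O(1)$ from the endpoints — so that all endpoint contributions are absorbed into $O(1)$, which is dominated by $O(x^{(k-1)/k^2})$ since $k\ge 4$ — the computation is routine. The verification of the Beta-integral constant is a standard Gamma-function manipulation and poses no real difficulty.
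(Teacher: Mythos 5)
Your overall strategy (expand the floor function, use Lemma~\ref{lemmaerrorterm} for the oscillatory error, evaluate the main term by a Beta integral) is the right one, but there is a genuine gap in how you dispose of the error sum on the upper range $(x/2)^{1/k}<m\le x^{1/k}$. You propose to set $n=(x-m^k)^{1/k}$ and claim that, up to $O(1)$ boundary corrections, $\sum_{(x/2)^{1/k}<m\le x^{1/k}}B_1\big((x-m^k)^{1/k}\big)$ becomes $\sum_{n\le(x/2)^{1/k}}B_1\big((x-n^k)^{1/k}\big)$. This substitution does not do what you want: the points $n=(x-m^k)^{1/k}$ arising from integers $m$ in the upper range are not integers, so they are not the index set of the target sum, and $B_1$ at the transformed points bears no relation to $B_1$ at integer points $n\le(x/2)^{1/k}$ (taken literally, $(x-n^k)^{1/k}=m$ is an integer and every summand degenerates to $B_1(m)=-\tfrac12$). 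The symmetry of the region $X^k+Y^k\le x$ is a symmetry of the \emph{lattice-point count}, not of the fractional-part sums. Nor can you simply extend Lemma~\ref{lemmaerrorterm} to the full range: near $m=x^{1/k}$ the second derivative of $h(x-\alpha^k)^{1/k}$ blows up, so the van der Corput estimate underlying that lemma genuinely fails there. The repair is to apply the symmetry one step earlier, before introducing $B_1$ at all: classifying lattice points by whether each coordinate exceeds $(x/2)^{1/k}$ gives
\begin{equation*}
\sum_{n\le x} r_{k,2}(n) = 2\sum_{m\le(x/2)^{1/k}} \big\lfloor(x-m^k)^{\frac{1}{k}}\big\rfloor - \Big\lfloor\Big(\tfrac{x}{2}\Big)^{\frac{1}{k}}\Big\rfloor^2,
\end{equation*}
after which Lemma~\ref{lemmaerrorterm} is only ever invoked on the range where it is stated; this is exactly the paper's route.

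A secondary, repairable issue is the bookkeeping of the coefficient $-x^{1/k}$. With $f(t)=(x-t^k)^{1/k}$, the Euler--Maclaurin endpoint correction at $t=0$ contributes $-\tfrac12 f(0)=-\tfrac12 x^{1/k}$, which \emph{adds to} (does not cancel) the $-\tfrac12\lfloor x^{1/k}\rfloor$ coming from the explicit $-\tfrac12$ in each summand, producing $-x^{1/k}$. Your alternative explanation, that the term comes from summing over $m\le(x-1)^{1/k}$ rather than $m\le x^{1/k}$, is wrong: that interval contains $O(1)$ integers, each contributing $\lfloor(x-m^k)^{1/k}\rfloor=0$, so it changes nothing of size $x^{1/k}$. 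Your remarks about truncating Euler--Maclaurin near the singular endpoint $t=x^{1/k}$ are sound, and the Beta-function evaluation is correct; only the upper-range transfer, as written, is not a proof.
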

\begin{proof}
The LHS of (\ref{whens=2}) is the number of lattice points in Quadrant I under the curve $X^k+Y^k=x$. By separating the lattice points according to whether one or both coordinates are no more than $(x/2)^{1/k}$, we have
\begin{equation*}
\sum_{n\le x} r_{k,2}(n) = 2\sum_{m\le(x/2)^{1/k}} \big\lfloor(x-m^k)^{\frac{1}{k}}\big\rfloor -\Big\lfloor \Big( \frac{x}{2}\Big)^{\frac{1}{k}}\Big\rfloor^2.
\end{equation*}
By definition of $B_1(\alpha)$ and Lemma \ref{lemmaerrorterm}, we have
\begin{align}\label{rk,2nInitial}
\sum_{n\le x} r_{k,2}(n) &= 2\sum_{m\le(x/2)^{1/k}}\Big( (x-m^k)^{\frac{1}{k}}-\frac{1}{2}-B_1\big( (x-m^k)^{\frac{1}{k}}\big)\Big) -\Big\lfloor \Big( \frac{x}{2}\Big)^{\frac{1}{k}}\Big\rfloor^2 \notag \\
&= 2\sum_{m\le(x/2)^{1/k}} (x-m^k)^{\frac{1}{k}} - \Big\lfloor \Big( \frac{x}{2}\Big)^{\frac{1}{k}}\Big\rfloor - \Big\lfloor \Big( \frac{x}{2}\Big)^{\frac{1}{k}}\Big\rfloor^2 +O\Big(x^{\frac{k-1}{k^2}}\Big).
\end{align}
We write $(x-m^k)^{1/k}$ as an integral, and change the order of summation. The first term above is
\begin{align}\label{rk,2nStep2}
2\sum_{m\le(x/2)^{1/k}} (x-m^k)^{\frac{1}{k}} =& 2\sum_{m\le(x/2)^{1/k}} \int_m^{x^{1/k}} \alpha^{k-1} (x-\alpha^k)^{-\frac{k-1}{k}}\mathrm{d}\alpha\notag\\
=&2\int_0^{x^{1/k}} \min\Big( \lfloor\alpha\rfloor,\Big\lfloor\Big(\frac{x}{2}\Big)^{\frac{1}{k}}\Big\rfloor \Big) \alpha^{k-1} (x-\alpha^k)^{-\frac{k-1}{k}}\mathrm{d}\alpha\notag\\
=&\int_0^{(x/2)^{1/k}} 2\Big(\alpha-\frac{1}{2}\Big) \alpha^{k-1} (x-\alpha^k)^{-\frac{k-1}{k}}\mathrm{d}\alpha\notag\\
&-\int_0^{(x/2)^{1/k}} 2B_1(\alpha) \alpha^{k-1} (x-\alpha^k)^{-\frac{k-1}{k}}\mathrm{d}\alpha\notag\\
&+2\int_{(x/2)^{1/k}}^{x^{1/k}} \Big\lfloor\Big(\frac{x}{2}\Big)^{\frac{1}{k}}\Big\rfloor \alpha^{k-1} (x-\alpha^k)^{-\frac{k-1}{k}}\mathrm{d}\alpha.
\end{align}
A straightforward calculation on the last integral shows that
\begin{equation}\label{rk,2nStep2Term3}
\int_{(x/2)^{1/k}}^{x^{1/k}} \Big\lfloor\Big(\frac{x}{2}\Big)^{\frac{1}{k}}\Big\rfloor \alpha^{k-1} (x-\alpha^k)^{-\frac{k-1}{k}}\mathrm{d}\alpha= \Big\lfloor\Big(\frac{x}{2}\Big)^{\frac{1}{k}}\Big\rfloor \Big(\frac{x}{2}\Big)^{\frac{1}{k}}.
\end{equation}
Then we integrate the other two integrals by parts. The first integral is
\begin{align}\label{rk,2nStep2Term1}
&(1-2\alpha)(x-\alpha^k)^{\frac{1}{k}}\Big\vert_0^{(x/2)^{1/k}} -\int_0^{(x/2)^{1/k}} (x-\alpha^k)^{\frac{1}{k}} \mathrm{d}(1-2\alpha) \notag\\
=&-2\Big(\frac{x}{2}\Big)^{\frac{2}{k}}+\Big(\frac{x}{2}\Big)^{\frac{1}{k}}-x^{\frac{1}{k}}+2\int_0^{(x/2)^{1/k}} (x-\alpha^k)^{\frac{1}{k}}\mathrm{d}\alpha,
\end{align}
while the second one is
\begin{align*}
&\int_0^{(x/2)^{1/k}} 2\alpha^{k-1} (x-\alpha^k)^{-\frac{k-1}{k}}\mathrm{d}(B_2(\alpha)) \notag\\
=&2B_2\Big(\Big(\frac{x}{2}\Big)^{\frac{1}{k}}\Big)-2\int_0^{(x/2)^{1/k}} B_2(\alpha) \mathrm{d}\Big(\alpha^{k-1} (x-\alpha^k)^{-\frac{k-1}{k}}\Big).
\end{align*}
Since $B_2(\alpha)\ll 1$, and
\begin{equation*}
\frac{\mathrm{d}}{\mathrm{d}\alpha} \Big(\alpha^{k-1} (x-\alpha^k)^{-\frac{k-1}{k}}\Big)=x\alpha^{k-2}(x-\alpha^k)^{-\frac{2k-1}{k}} >0
\end{equation*}
when $0<\alpha<(x/2)^{1/k}$, we have
\begin{equation}\label{rk,2nStep2Term2}
\int_0^{(x/2)^{1/k}} 2B_1(\alpha) \alpha^{k-1} (x-\alpha^k)^{-\frac{k-1}{k}}\mathrm{d}\alpha=O(1).
\end{equation}
By (\ref{rk,2nInitial}), (\ref{rk,2nStep2}), (\ref{rk,2nStep2Term3}), (\ref{rk,2nStep2Term1}) and (\ref{rk,2nStep2Term2}), and the fact that
\begin{equation*}
\Big(\frac{x}{2}\Big)^{\frac{1}{k}}- \Big\lfloor\Big(\frac{x}{2}\Big)^{\frac{1}{k}}\Big\rfloor\ll 1
\end{equation*}
and its square
\begin{equation*}
\Big(\frac{x}{2}\Big)^{\frac{2}{k}}-2\Big\lfloor\Big(\frac{x}{2}\Big)^{\frac{1}{k}}\Big\rfloor \Big(\frac{x}{2}\Big)^{\frac{1}{k}}+\Big\lfloor\Big(\frac{x}{2}\Big)^{\frac{1}{k}}\Big\rfloor^2\ll 1,
\end{equation*}
we have
\begin{equation*}
\sum_{n\le x} r_{k,2}(n)=2\int_0^{(x/2)^{1/k}} (x-\alpha^k)^{\frac{1}{k}}\mathrm{d}\alpha-\Big(\frac{x}{2}\Big)^{\frac{2}{k}}-x^{\frac{1}{k}}+O\Big(x^{\frac{k-1}{k^2}}\Big).
\end{equation*}
Finally, the integral above represents the area of the set of points $(X, Y)$ in Quadrant I such that $X^k+Y^k\le x$ and $X\le(x/2)^{1/k}$, or equivalently, such that $Y^k+X^k\le x$ and $Y\le(x/2)^{1/k}$ when interchanging the variables $X$ and $Y$. Therefore, twice the integral represents the area of the whole region $X^k+Y^k\le x$ plus the area of the square $0\le X\le(x/2)^{1/k}, 0\le Y\le(x/2)^{1/k}$. Hence
\begin{equation*}
2\int_0^{(x/2)^{1/k}} (x-\alpha^k)^{\frac{1}{k}}\mathrm{d}\alpha-\Big(\frac{x}{2}\Big)^{\frac{2}{k}}=\int_0^{x^{1/k}} (x-\alpha^k)^{\frac{1}{k}}\mathrm{d}\alpha=\frac{1}{k} B\Big(\frac{1}{k},\frac{1}{k}+1\Big) x^{\frac{2}{k}}.
\end{equation*}
The lemma is then proved according to the relationship between the beta function and the gamma function.
\end{proof}
To finalize the proof of Theorem \ref{MainThm}, we use induction on $s$.  By Lemma \ref{lemmawhens=2}, the statement holds for the initial case $s=2$. Hence we only need to prove that if the statement is true for a particular $s$ where $2\le s\le k$, then it is also true for $s+1$. First, we notice that
\begin{equation*}
\sum_{m\le x} r_{k,s+1} (m)=\sum_{l\le x^{1/k}} \sum_{n \le x-l^{k}} r_{k,s} (n),
\end{equation*}
since both sides represent the number of $(s+1)$-tuples $(x_1, x_2, \cdots, x_s, l)$ such that all coordinates are positive integers and
\begin{equation*}
x_1^k+x_2^k+\cdots+x_s^k+l^k\le x.
\end{equation*}
Hence from assumption, we have
\begin{equation}\label{InductionMain}
\sum_{m\le x} r_{k,s+1}(m)= \frac{\Gamma \big( \frac{k+1}{k} \big)^s}{\Gamma\big( \frac{k+s}{k} \big)} \sum_{l\le x^{1/k}} (x-l^k)^{\frac{s}{k}} - \frac{s}{2}\cdot  \frac{\Gamma \big( \frac{k+1}{k} \big)^{s-1}}{\Gamma\big( \frac{k+s-1}{k} \big)} \sum_{l\le x^{1/k}} (x-l^k)^{\frac{s-1}{k}} + O\big( x^{\frac{sk-1}{k^2}} \big).
\end{equation}
Now we calculate the first sum on RHS. We have
\begin{align}\label{InductionMainSum1}
\sum_{l\le x^{1/k}} (x-l^k)^{\frac{s}{k}}  = & \sum_{l\le x^{1/k}} \int_l^{x^{1/k}} s\alpha^{k-1} (x-\alpha^k)^{\frac{s}{k}-1}\mathrm{d}\alpha\notag\\
= & \int_0^{x^{1/k}} \lfloor\alpha\rfloor \cdot s\alpha^{k-1} (x-\alpha^k)^{\frac{s}{k}-1} \mathrm{d}\alpha \notag\\
= & \int_0^{x^{1/k}} \Big( \alpha-\frac{1}{2} \Big)s\alpha^{k-1} (x-\alpha^k)^{\frac{s}{k}-1} \mathrm{d}\alpha \notag\\
& -\int_{\eta}^{x^{1/k}} B_1(\alpha) s\alpha^{k-1} (x-\alpha^k)^{\frac{s}{k}-1} \mathrm{d}\alpha \notag\\
& - \int_0^{\eta} B_1(\alpha) s\alpha^{k-1} (x-\alpha^k)^{\frac{s}{k}-1} \mathrm{d}\alpha ,
\end{align}
where
\begin{equation*}
\eta= \big(x-x^{\frac{k-1}{k}}\big)^{\frac{1}{k}}.
\end{equation*}
Integrating the first integral on RHS of (\ref{InductionMainSum1}) by parts, it is
\begin{equation*}
\int_0^{x^{1/k}}\Big(\frac{1}{2}-\alpha\Big) \mathrm{d}\big((x-\alpha^k)^{\frac{s}{k}}\big) = -\frac{1}{2} x^{\frac{s}{k}}+\int_0^{x^{1/k}} (x-\alpha^k)^{\frac{s}{k}} \mathrm{d}\alpha,
\end{equation*}
while the latter integral is
\begin{equation*}
\int_0^{x^{1/k}} (x-\alpha^k)^{\frac{s}{k}} \mathrm{d}\alpha = \frac{1}{k} B\Big( \frac{1}{k},\frac{s}{k}+1 \Big) x^{\frac{s+1}{k}} =\frac{\Gamma\big( \frac{k+1}{k}\big)\Gamma\big(\frac{k+s}{k} \big)}{\Gamma\big(\frac{k+s+1}{k} \big)} x^{\frac{s+1}{k}}.
\end{equation*}
Since $B_1(\alpha)\ll 1$, we have
\begin{equation*}
\int_{\eta}^{x^{1/k}} B_1(\alpha) s\alpha^{k-1} (x-\alpha^k)^{\frac{s}{k}-1} \mathrm{d}\alpha \ll \int_{\eta}^{x^{1/k}} \mathrm{d} \big(-(x-\alpha^k)^{\frac{s}{k}} \big) = x^{\frac{s(k-1)}{k^2}}.
\end{equation*}
Finally, we integrate the last integral of (\ref{InductionMainSum1}) by parts. As $B_1(\alpha)=\mathrm{d} (B_2(\alpha))$, the integral is
\begin{equation*}
B_2(\eta) s\eta^{k-1} (x-\eta^k)^{\frac{s}{k}-1} - \int_0^{\eta} B_2(\alpha) \mathrm{d} \big( s\alpha^{k-1} (x-\alpha^k)^{\frac{s}{k}-1} \big).
\end{equation*}
Since $B_2(\alpha)\ll 1$, we have
\begin{equation*}
B_2(\eta) s\eta^{k-1} (x-\eta^k)^{\frac{s}{k}-1} \ll \big( x^{\frac{1}{k}}\big)^{k-1} \big(x^{\frac{k-1}{k}}\big)^{\frac{s}{k}-1} = x^{\frac{s(k-1)}{k^2}}
\end{equation*}
and
\begin{equation*}
\int_0^{\eta} B_2(\alpha) \mathrm{d} \big( s\alpha^{k-1} (x-\alpha^k)^{\frac{s}{k}-1} \big) \ll \int_0^{\eta} \Big\vert\frac{\mathrm{d}}{\mathrm{d}\alpha} \big( s\alpha^{k-1} (x-\alpha^k)^{\frac{s}{k}-1} \big) \Big\vert\mathrm{d}\alpha.
\end{equation*}
We notice that if $0\le\alpha\le\eta$, $k\ge4$ and $2\le s\le k$, then
\begin{equation}\label{ImportantNonnegative}
\frac{\mathrm{d}}{\mathrm{d}\alpha} \big( s\alpha^{k-1} (x-\alpha^k)^{\frac{s}{k}-1} \big) = s\alpha^{k-2} (x-\alpha^k)^{\frac{s}{k}-2} \big((k-1)x+(1-s)\alpha^k\big) \ge0,
\end{equation}
so
\begin{equation*}
\int_0^{\eta} B_2(\alpha) \mathrm{d} \big( s\alpha^{k-1} (x-\alpha^k)^{\frac{s}{k}-1} \big) \ll  s\eta^{k-1} (x-\eta^k)^{\frac{s}{k}-1} \ll x^{\frac{s(k-1)}{k^2}}.
\end{equation*}
Hence
\begin{equation}\label{InductionSum1}
\sum_{l\le x^{1/k}} (x-l^k)^{\frac{s}{k}}= \frac{\Gamma\big( \frac{k+1}{k}\big)\Gamma\big(\frac{k+s}{k} \big)}{\Gamma\big(\frac{k+s+1}{k} \big)} x^{\frac{s+1}{k}} - \frac{1}{2} x^{\frac{s}{k}}+O\big( x^{\frac{s(k-1)}{k^2}} \big).
\end{equation}
Similarly,
\begin{equation}\label{InductionSum2}
\sum_{l\le x^{1/k}} (x-l^k)^{\frac{s-1}{k}}= \frac{\Gamma\big( \frac{k+1}{k}\big)\Gamma\big(\frac{k+s-1}{k} \big)}{\Gamma\big(\frac{k+s}{k} \big)} x^{\frac{s}{k}} - \frac{1}{2} x^{\frac{s-1}{k}} +O\big( x^{\frac{(s-1)(k-1)}{k^2}} \big).
\end{equation}
Therefore, by (\ref{InductionMain}), (\ref{InductionSum1}) and (\ref{InductionSum2}), we have
\begin{equation}
\sum_{m\le x} r_{k, s+1}(m)= \frac{\Gamma \big( \frac{k+1}{k} \big)^{s+1}}{\Gamma\big( \frac{k+s+1}{k} \big)}  x^{\frac{s+1}{k}} - \frac{(s+1)}{2}\cdot  \frac{\Gamma \big( \frac{k+1}{k} \big)^{s}}{\Gamma\big( \frac{k+s}{k} \big)} x^{\frac{s}{k}} + O\big( x^{\frac{sk-1}{k^2}} \big).
\end{equation}
which means that the statement is also true for $s+1$ under the assumption. Hence Theorem \ref{MainThm} is proved by induction.

\section{Several Notes}

Sometimes, we are particularly interested in the case when $s=k$. For convenience, we rewrite $r_{k,k}(n)$ as $r_k (n)$. Then we have
\begin{corollary}
If $k\ge4$ is an integer, then
\begin{equation}\label{CorResult}
\sum_{m=1}^n r_{k}(m) = \Gamma \Big( \frac{k+1}{k} \Big)^k n - \frac{k}{2}\cdot  \frac{\Gamma \big( \frac{k+1}{k} \big)^{k-1}}{\Gamma\big( \frac{2k-1}{k} \big)} n^{1-\frac{1}{k}} +O\Big(n^{\frac{k^2-k-1}{k^2}} \Big),
\end{equation}
or equivalently,
\begin{equation}\label{CorResultEq}
\sum_{m\le x} r_{k}(m) = \Gamma \Big( \frac{k+1}{k} \Big)^k x - \frac{k}{2}\cdot  \frac{\Gamma \big( \frac{k+1}{k} \big)^{k-1}}{\Gamma\big( \frac{2k-1}{k} \big)} x^{1-\frac{1}{k}} +O\Big(x^{\frac{k^2-k-1}{k^2}} \Big).
\end{equation}
\end{corollary}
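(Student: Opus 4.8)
The plan is to obtain this corollary as the special case $s = k$ of Theorem \ref{MainThm}. The first step is simply to verify that the hypotheses are met: since $k \ge 4$, we certainly have $2 \le k \le k+1$, so Theorem \ref{MainThm} is applicable with $s$ replaced by $k$, and both (\ref{MainResult}) and (\ref{MainResultEq}) specialize directly.

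The second step is to substitute $s = k$ into (\ref{MainResult}) and simplify each of the three terms. In the leading term the ratio of Gamma factors becomes
\[
\frac{\Gamma\big(\frac{k+1}{k}\big)^{k}}{\Gamma\big(\frac{k+k}{k}\big)} = \frac{\Gamma\big(\frac{k+1}{k}\big)^{k}}{\Gamma(2)} = \Gamma\Big(\frac{k+1}{k}\Big)^{k},
\]
using $\Gamma(2) = 1$, while the power of $n$ is $n^{k/k} = n$. In the secondary term the exponent of $n$ is $\frac{k-1}{k} = 1 - \frac{1}{k}$ and the Gamma factor in the denominator is $\Gamma\big(\frac{k+k-1}{k}\big) = \Gamma\big(\frac{2k-1}{k}\big)$, so that term equals $-\frac{k}{2}\cdot\frac{\Gamma((k+1)/k)^{k-1}}{\Gamma((2k-1)/k)}\, n^{1-1/k}$. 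Finally the error exponent becomes $\frac{(k-1)k - 1}{k^{2}} = \frac{k^{2} - k - 1}{k^{2}}$. Collecting these gives (\ref{CorResult}), and the equivalent statement (\ref{CorResultEq}) follows in exactly the same manner from (\ref{MainResultEq}).

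I do not expect any real obstacle here: all of the analytic content is already carried out in the proof of Theorem \ref{MainThm} (ultimately in Lemma \ref{lemmaerrorterm} and Lemma \ref{lemmawhens=2} together with the induction on $s$), and what remains is only the elementary identity $\Gamma(2) = 1$ and the routine arithmetic of simplifying the three exponents of $n$ in the case $s = k$.
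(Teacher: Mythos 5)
Your proposal is correct and matches the paper's (implicit) proof exactly: the corollary is simply Theorem \ref{MainThm} with $s=k$, which is admissible since $2\le k\le k+1$ for $k\ge4$, and the simplification via $\Gamma(2)=1$ and the exponent arithmetic is exactly what is intended.
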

We also notice that the inequality (\ref{ImportantNonnegative}) is not necessarily true when $s\ge k+1$ and $0\le\alpha\le\eta$. Therefore, we can not have any further induction from $s$ to $s+1$ when $s\ge k+1$. Hence Theorem \ref{MainThm} may not be true when $s$ is large.







\end{document}